\newcommand{\w}{\omega}
\newcommand{\pack}{\mathrm{pack}}
\newcommand{\Pack}{\mathrm{Pack}}
\newcommand{\e}{\varepsilon}
\newcommand{\IN}{{\mathbb{N}}}
\newcommand{\I}{\mathcal I}
\newcommand{\IZ}{\mathbb Z}
\newcommand{\A}{\mathcal A}
\newcommand{\rank}{\mathrm{rank}}
\newcommand{\pr}{\mathrm{pr}}
\newtheorem{lemma}{Lemma}
\newtheorem{claim}{Claim}
\newtheorem{theorem}{Theorem}
\newtheorem{corollary}{Corollary}
\title[Packing index of universally small sets in Polish groups]{Constructing universally small subsets of a given packing index in Polish groups}
\author{Taras Banakh and Nadya Lyaskovska}
\subjclass{03E15; 03E50; 22A05; 54H05; 54H11}
\keywords{Universally small set,  universally meager set, universally null set, packing index, Polish group, coanalytic set}
\address{Instytut matematyki, Uniwersytet Jana Kochanowskiego, Kielce, Poland}
\address{Department of Mathematics, Ivan Franko National University of Lviv, Ukraine}
\email{t.o.banakh@gmail.com, lyaskovska@yahoo.com}
\begin{document}

\begin{abstract} A subset of a Polish space $X$ is called {\em universally small} if it belongs to each ccc $\sigma$-ideal with Borel base on $X$. Under CH in each uncountable Abelian Polish group $G$ we construct a universally small subset $A_0\subset G$ such that $|A_0\cap gA_0|=\mathfrak c$ for each $g\in G$. For each cardinal number $\kappa\in[5,\mathfrak c^+]$ the set $A_0$ contains a universally small subset $A$ of $G$ with sharp packing index $\pack^\sharp(A_\kappa)=\sup\{|\mathcal D|^+:\mathcal D\subset \{gA\}_{g\in G}$ is disjoint$\}$ equal to $\kappa$.
\end{abstract}
\maketitle
\baselineskip14pt

\section{Introduction}
This paper is motivated by a problem of Dikranjan and Protasov \cite{DP} who asked if the group of integers $\IZ$ contains a subset $A\subset \IZ$ such that the family of shifts $\{x+A\}_{x\in\IZ}$ contains a disjoint subfamily of arbitrarily large finite cardinality but does not contains an infinite disjoint subfamily. This problem can be reformulated in the language of packing indices $\pack(A)$ and $\pack^\sharp(A)$, defined for any subset $A$ of an group $G$ by the formulas:
$$\pack(A)=\sup\{|\mathcal D|:\mbox{$\mathcal D\subset\{gA\}_{g\in G}$ is a disjoint subfamily}\}$$
and
$$\pack^\sharp(A)=\sup\{|\mathcal D|^+:\mbox{$\mathcal D\subset\{gA\}_{g\in G}$ is a disjoint subfamily}\}.$$
So, actually Dikranjan and Protasov asked about the existence of a subset $A\subset \IZ$ with packing
index $\pack^\sharp(A)=\aleph_0$. This problem was answered affirmatively in \cite{BL1} and \cite{BL2}. Moreover, in  \cite{La} the second author proved that for any cardinal $\kappa$ with $2\le\kappa\le|G|^+$ and $\kappa\notin\{3,4\}$ in any Abelian group $G$ there is a subset $A\subset G$ with packing index $\pack^\sharp(A)=\kappa$. By Theorem 6.3 of \cite{BLR}, such a set $A$ can be found in any subset  $L\subset A$ with Packing index  $\Pack(L)=1$ where
$$\Pack(A)=\sup\{|\mathcal A|:\mathcal A\subset\{gA\}_{g\in G}\mbox{ \ is $|G|$-almost disjoint}\}.$$
A family $\A$ of sets is called {\em $\kappa$-almost disjoint} for a cardinal $\kappa$ if $|A\cap A'|<\kappa$ for any distinct sets $A,A'\in\A$. So, being $1$-almost disjoint is equivalent to being disjoint.

A subset $A\subset G$ with small packing index can be thought as large in a geometric sense because in this case the group $G$ does not contain many disjoint translation copies of $A$. It is natural to compare this largeness property with other largeness properties that have topological or measure-theoretic nature. It turns out that a set of a group $A$ can have small packing index (so can be large in geometric sense) and simultaneously be small in other senses. In \cite{BLR} it was proved that each uncountable Polish Abelian group $G$ contains a closed subset $A\subset G$ that has large packing index $\Pack(A)=1$ but is nowhere dense and Haar null in $G$. According to Theorem 16.3 \cite{PB}, under CH (the Continuum Hypothesis), each Polish group $G$ contains a subset $A$ with packing index $\pack(A)=1$, which is {\em universally null} in the sense that $A$ has measure zero with respect to any atomless Borel probability measure on $G$. In this paper we move further in this direction and prove that under CH each uncountable Abelian Polish group $G$ contains a subset $A\subset G$ with large packing index $\Pack(A)=1$, which is {\em universally small} in the sense that it belongs to any ccc Borel $\sigma$-ideal on $G$. This fact combined with Theorem 6.3 of \cite{BLR} allows us to construct universally small subsets of a given packing index in uncountable Polish Abelian groups.

Following Zakrzewski \cite{Zak2} we call a subset $A$ of a Polish space $X$ {\em universally small} if $A$ belongs to each ccc $\sigma$-ideal with Borel base on $X$. By an {\em ideal} on a set $X$ we understand a family $\mathcal I$ of subsets of $X$ such that
\begin{itemize}
\item $\cup\I=X\notin\I$;
\item for any sets $A,B\in\I$ we get $A\cup B\in\I$;
\item for any sets $A\in\I$ and $B\subset X$ we get $A\cap B\in\I$.
\end{itemize}
An ideal $\I$ on a Polish space $X$ is called
\begin{itemize}
\item a {\em $\sigma$-ideal} if $\cup\A\in\I$ for any countable subfamily $\A\subset\I$;
\item an {\em ideal with Borel base} if each set $A\in\I$ lies in a Borel subset $B\in\I$;
\item a {\em ccc ideal} if $X$ contains no uncountable disjoint family of Borel subsets outside $\I$.
\end{itemize}
Standard examples of ccc Borel $\sigma$-ideals are the ideal $\mathcal M$ of meager subsets of a Polish space $X$ and the ideal $\mathcal N$ of null  subsets with respect to an atomless Borel $\sigma$-additive measure on $X$. This implies that a universally small subset $A$ is universally null and universally meager. Following \cite{Zak1} we call a subset $A$ of a Polish space $X$ to be {\em universally meager} if for any Borel isomorphism $f:A\to 2^\w$ the image $f(A)$ is meager in the Cantor cube $2^\w$. Universally small sets were introduced by P.Zakrzewski \cite{Zak2} who constructed an uncountable universally small subset in each uncountable Polish space. It should be mentioned that there are models of ZFC \cite[\S5]{Miller} in which all universally small sets in Polish spaces have cardinality $\le\aleph_1<\mathfrak c$. In such models any universally small set $A$ in the real line has maximal possible packing index $\pack(A)=\Pack(A)=\mathfrak c$. This fact shows that the following theorem, which is the main result of this paper, necessarily has consistency nature and cannot be proved in ZFC.

\begin{theorem}\label{main} Under CH, each uncountable Abelian Polish group $G$ contains a universally small subset $A_0\subset G$ with Packing index $\Pack(A_0)=1$.
\end{theorem}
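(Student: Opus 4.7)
The plan is to construct $A_0$ by transfinite recursion of length $\omega_1$, using CH to enumerate all continuum-many ``test'' objects. Since $|G|=\mathfrak c=\aleph_1$, I would list $G=\{g_\alpha:\alpha<\omega_1\}$ and the Borel subsets of $G$ as $\{B_\alpha:\alpha<\omega_1\}$, then interleave them into a bookkeeping sequence that meets every pair $(B,g)$ cofinally often. At each stage a pair $(a_\alpha, g_\alpha a_\alpha)$ will be adjoined to the approximation $A_\alpha$, so that in the limit $|A_0\cap gA_0|=\mathfrak c$ for every $g\in G$, giving $\Pack(A_0)=1$.

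The essential preliminary is a characterization of universal smallness amenable to transfinite induction. Following Zakrzewski, one reduces the global demand ``$A_0$ lies in every ccc Borel $\sigma$-ideal on $G$'' to $\omega_1$ local conditions of the form ``$A_0\cap B_\alpha$ is suitably small'' over the enumerated Borel sets; the ccc condition together with the existence of a Borel base forces each ideal to be determined by its countably-generated Borel traces, so even though there may be up to $2^{\mathfrak c}$ such ideals, only $\mathfrak c$ Borel witnesses need be handled. At stage $\alpha$, I would pick $a_\alpha\in G$ that (i) lies outside a ``dangerous'' sub-region of $B_\alpha$ assigned by the bookkeeping, (ii) avoids a previously-built forbidden set of cardinality $<\aleph_1$ encoding the smallness constraints inherited from earlier stages, and (iii) has $g_\alpha a_\alpha$ satisfying the same constraints. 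Setting $A_{\alpha+1}=A_\alpha\cup\{a_\alpha, g_\alpha a_\alpha\}$ and $A_0=\bigcup_{\alpha<\omega_1}A_\alpha$ then yields the desired set. The choice at stage $\alpha$ is possible because the forbidden set $F$, together with its shift $g_\alpha^{-1}F$, still has cardinality $<\aleph_1=|G|$.

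The main obstacle is the first step: isolating a form of the universal-smallness characterization that (a) reduces the ideal-theoretic condition to $\omega_1$ Borel-set conditions, and (b) is preserved by the inductive choice even when one is simultaneously forcing each translate $gA_0$ to meet $A_0$ in a set of size $\mathfrak c$. The thickness requirement is delicate because the shifts $g_\alpha^{-1}B_\beta$ of the test Borel sets must also be controlled; this is handled by enlarging the family $\{B_\alpha\}$ under translation in advance and verifying, via the ccc property, that the ``doubled'' forbidden set at each stage remains of cardinality $<\mathfrak c$. The verification that the resulting $A_0$ indeed belongs to every ccc Borel $\sigma$-ideal then reduces to checking, for an arbitrary such $\mathcal I$, that the Borel witnesses against $A_0\in\mathcal I$ have already been dealt with in the enumeration, giving the required membership by a $\sigma$-ideal-plus-ccc argument.
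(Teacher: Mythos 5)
Your construction scheme has the right combinatorial skeleton (a length-$\omega_1$ recursion adjoining pairs $(a_\alpha,g_\alpha a_\alpha)$ so that $|A_0\cap gA_0|=\mathfrak c$ for all $g$, which indeed gives $\Pack(A_0)=1$), but the proof has a genuine gap exactly where you locate ``the main obstacle'': step (a), the claimed reduction of universal smallness to $\omega_1$ conditions on an enumerated list of Borel sets. The justification you offer --- that ccc together with a Borel base ``forces each ideal to be determined by its countably-generated Borel traces'' --- is false: the meager ideal and the null ideal are ccc $\sigma$-ideals with Borel base, yet neither is countably generated. More fundamentally, the quantifier runs over up to $2^{\mathfrak c}$ ccc Borel $\sigma$-ideals, and membership $A_0\in\I$ requires a \emph{covering} Borel set $B\in\I$ with $A_0\subset B$; whether a given Borel set from your list belongs to a given ideal varies with the ideal, so point-by-point \emph{avoidance} of enumerated Borel sets (your conditions (i)--(iii), with forbidden sets of size $<\aleph_1$) cannot certify membership in any single ideal, let alone all of them. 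Your closing verification is likewise circular: ccc rules out uncountable disjoint families of $\I$-positive Borel sets, but your construction produces no such family to exploit, and ``the Borel witnesses have been dealt with'' has no content without a characterization of universal smallness by $\mathfrak c$-many Borel tests --- which is precisely what is missing and what no enumeration supplies.

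The paper resolves this by a mechanism of a different kind, which handles all $2^{\mathfrak c}$ ideals \emph{uniformly} rather than by enumeration: fix a coanalytic non-Borel set $K$ with a rank function $\rank:K\to\w_1$ and rank levels $B_\alpha$. By the Szpilrajn--Marczewski theorem, every ccc Borel $\sigma$-ideal $\I$ measures coanalytic sets, so $K\setminus B\in\I$ for some Borel $B\subset K$, and $B\subset B_\beta$ for some $\beta<\w_1$; hence \emph{any} transfinite sequence $x_\alpha\in K\setminus B_\alpha$ is universally small (all but countably many of its points lie in $K\setminus B\in\I$). The remaining work --- and the actual technical content of the paper --- is Lemma~\ref{coan}: a special coanalytic $K$ and open $U$ with $U\subset(K\setminus A)-(K\setminus A)$ for every analytic $A\subset K$, built from two sum-sets $\Sigma_0,\Sigma_1$ indexed by Cantor sets carrying coanalytic non-analytic subsets. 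This is what lets one choose, at stage $\alpha$, points $x_\alpha,y_\alpha$ escaping $B_\alpha$ \emph{and} satisfying $x_\alpha-y_\alpha=u_\alpha$, with each $u\in U$ repeated $\w_1$ times; a countable $C$ with $C+U=G$ then transfers thickness from $U$ to all of $G$, using that universal smallness is a translation-invariant $\sigma$-ideal. Without some analogue of this rank-escape-plus-difference-set lemma, your recursion has no way to make $A_0$ universally small, so the proposal as written does not go through.
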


Combining this theorem with Theorem 6.4 of \cite{BLR} we get

\begin{corollary} Under CH, for any cardinal $\kappa\in[2,\mathfrak c^+]$ with $\kappa\notin\{3,4\}$ any uncountable Polish group $G$ contains a universally small subset $A$ with sharp packing index $\pack^\sharp(A)=\kappa$.
\end{corollary}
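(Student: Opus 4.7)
The plan is to derive the corollary directly from Theorem~\ref{main} by combining it with the external Theorem~6.4 of \cite{BLR}. The role of Theorem~\ref{main} is to supply a single ``master'' set: under CH, the group $G$ (assumed Abelian, as required by Theorem~\ref{main}) contains a universally small $A_0\subset G$ with $\Pack(A_0)=1$, meaning that no two distinct translates of $A_0$ are $\mathfrak c$-almost disjoint. The role of Theorem~6.4 of \cite{BLR}, which is the Polish-space refinement of the Theorem~6.3 of \cite{BLR} recalled in the introduction, is the purely combinatorial mechanism that, inside any subset $L\subset G$ with $\Pack(L)=1$, extracts for each prescribed cardinal $\kappa\in[2,|G|^+]\setminus\{3,4\}$ a subset $A\subset L$ realizing $\pack^\sharp(A)=\kappa$.

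Given these two ingredients, the argument is a single composition. Fix $\kappa\in[2,\mathfrak c^+]\setminus\{3,4\}$. Invoke Theorem~\ref{main} to obtain $A_0$ as above; since $G$ is an uncountable Polish group under CH we have $|G|=\mathfrak c$, so the admissible range of $\kappa$ in Theorem~6.4 of \cite{BLR} becomes exactly $[2,\mathfrak c^+]\setminus\{3,4\}$. Apply that theorem to $L:=A_0$ to produce a subset $A\subset A_0$ with $\pack^\sharp(A)=\kappa$.

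It only remains to transfer universal smallness from $A_0$ to $A$. This is automatic from the ideal axioms stated in the introduction: for every ccc $\sigma$-ideal $\I$ with Borel base on $G$ we have $A_0\in\I$ by the choice of $A_0$, and the third ideal axiom (``$A\in\I$ and $B\subset X$ imply $A\cap B\in\I$'') gives $A=A\cap A_0\in\I$. Since $\I$ was arbitrary, $A$ is universally small in $G$. Thus all the substance of the corollary is packed into Theorem~\ref{main} and into the cited extraction result; the only conceivable obstacle is verifying that Theorem~6.4 of \cite{BLR} does in fact yield the realizing subset \emph{inside} the prescribed ground set $L$, which is precisely the feature used in the introduction to motivate the whole strategy.
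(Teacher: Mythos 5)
Your proposal matches the paper's own derivation exactly: the paper proves this corollary in a single line by combining Theorem~\ref{main} with Theorem~6.4 of \cite{BLR}, and your fleshed-out version --- applying that theorem to $L:=A_0$ and noting that universal smallness is inherited by subsets via the third ideal axiom --- is precisely the intended argument, correctly spelled out. One remark: like Theorem~\ref{main}, your argument requires $G$ to be Abelian, and although the corollary as printed says ``any uncountable Polish group'', the introduction makes clear that the intended scope is uncountable Polish \emph{Abelian} groups, so this is a discrepancy in the paper's statement rather than a gap in your proof.
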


\section{Universally small sets from coanalytic ranks}

In this section we describe a (known) method of constructing universally small sets, based on coanalytic ranks. Let us recall that a subset $A$ of a Polish space $X$ is
\begin{itemize}
\item {\em analytic} if $A$ is the continuous image of a Polish space;
\item {\em coanalytic} if $X\setminus A$ is analytic.
\end{itemize}
By Souslin's  Theorem \cite[14.11]{Ke}, a subset of a Polish space is Borel if and only if it is analytic and coanalytic.

It is known \cite[34.4]{Ke} that each coanalytic subset $K$ of a Polish space $X$ admits a {\em rank function} $\rank:K\to\w_1$ that has the following properties:
\begin{enumerate}
\item for every countable ordinal $\alpha$ the set $B_\alpha=\{x\in K:\rank(x)\le\alpha\}$ is Borel in $X$;
\item each analytic subspace $A\subset K$ lies in some set $B_\alpha$, $\alpha<\w_1$.
\end{enumerate}

The following fact is known and belongs to mathematical folklore (cf. \cite[5.3]{Miller}). For the convenience of the reader we attach a short proof.

\begin{lemma}\label{l1} Let $K$ be a coanalytic non-analytic set in a Polish space $X$ and $\rank:K\to\w_1$ be a rank function. For any transfinite sequence of points $x_\alpha\in K\setminus B_\alpha$, $\alpha\in\w_1$, the set $\{x_\alpha\}_{\alpha\in\w_1}$ is universally small in $X$.
\end{lemma}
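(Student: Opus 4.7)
The plan is to fix an arbitrary ccc Borel $\sigma$-ideal $\I$ on $X$ and argue by contradiction that $A:=\{x_\alpha\}_{\alpha<\w_1}\in\I$. Two preliminaries come first. Every singleton $\{x\}$ lies in $\I$: choose $I\in\I$ with $x\in I$ (possible since $\cup\I=X$), then $\{x\}=\{x\}\cap I\in\I$ by the ideal axioms; $\sigma$-additivity upgrades this to the statement that every countable subset of $X$ lies in $\I$. And for each $\alpha<\w_1$, the intersection $A\cap B_\alpha$ is countable, because $x_\beta\in B_\alpha$ forces $\rank(x_\beta)\le\alpha$, while $x_\beta\notin B_\beta$ forces $\rank(x_\beta)>\beta$, so $\beta<\alpha$; thus $A\cap B_\alpha\subseteq\{x_\beta:\beta<\alpha\}\in\I$.

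Next, suppose for contradiction that $A\notin\I$. Applying the ccc condition to the pairwise disjoint Borel family $\{B_{\alpha+1}\setminus B_\alpha\}_{\alpha<\w_1}$, at most countably many of its members can lie outside $\I$, so there is $\alpha^*<\w_1$ with $B_{\alpha+1}\setminus B_\alpha\in\I$ for every $\alpha\ge\alpha^*$. Since each ordinal below $\w_1$ is countable, the $\sigma$-ideal property then gives $B_\alpha\setminus B_{\alpha^*}\in\I$ for every $\alpha^*\le\alpha<\w_1$.

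I then focus on the coanalytic set $K':=K\setminus B_{\alpha^*}$. Any analytic subset $E\subseteq K'$ is in particular an analytic subset of $K$, so by property~(2) of the rank function it is contained in some $B_\alpha$ with $\alpha<\w_1$. Because $E\subseteq K'$, this forces $E\subseteq B_\alpha\setminus B_{\alpha^*}\in\I$ once $\alpha\ge\alpha^*$. Hence every Borel subset of $K'$ lies in $\I$. Invoking the standard fact that, in a ccc Borel $\sigma$-ideal, a coanalytic set all of whose Borel subsets lie in the ideal must itself lie in the ideal, one concludes $K'\in\I$. Consequently $A\setminus B_{\alpha^*}\subseteq K'\in\I$, and combined with $A\cap B_{\alpha^*}\in\I$ from the preliminaries this gives $A\in\I$, contradicting the assumption.

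The main obstacle is the last appeal: promoting ``every Borel subset of the coanalytic set $K'$ lies in $\I$'' to $K'\in\I$ itself. For the meager and null ideals this follows immediately from the Baire property / universal measurability of coanalytic sets, but for a general ccc Borel $\sigma$-ideal one has to combine the ccc hypothesis with the analytic-hull theorem (every $\I$-positive analytic set contains an $\I$-positive Borel subset), which is itself a standard consequence of ccc together with the Borel-base assumption. Apart from this single descriptive-set-theoretic input, every step of the plan is elementary.
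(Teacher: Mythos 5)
There is one step in your write-up that fails as written: the passage from ``$B_{\alpha+1}\setminus B_\alpha\in\I$ for all $\alpha\ge\alpha^*$'' to ``$B_\alpha\setminus B_{\alpha^*}\in\I$ for all $\alpha\ge\alpha^*$''. The successor difference $B_{\gamma+1}\setminus B_\gamma$ equals $\{x\in K:\rank(x)=\gamma+1\}$, so the union $\bigcup_{\alpha^*\le\gamma<\alpha}(B_{\gamma+1}\setminus B_\gamma)$ contains only points of successor rank; any $x$ whose rank is a limit ordinal $\lambda$ with $\alpha^*<\lambda\le\alpha$ lies in $B_\alpha\setminus B_{\alpha^*}$ but in none of your differences. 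The repair is immediate and stays inside your framework: apply the ccc condition instead to the pairwise disjoint Borel family of rank levels $R_\gamma=B_\gamma\setminus\bigcup_{\beta<\gamma}B_\beta$, $\gamma<\w_1$ (each $R_\gamma$ is Borel because $\gamma$ is countable), choose $\alpha^*$ above the countably many $\I$-positive levels, and conclude $B_\alpha\setminus B_{\alpha^*}=\bigcup_{\alpha^*<\gamma\le\alpha}R_\gamma\in\I$ by $\sigma$-additivity. With this change all subsequent steps go through; your preliminary observations (singletons, hence countable sets, lie in $\I$ because $\cup\I=X$, and $A\cap B_\alpha\subseteq\{x_\beta:\beta<\alpha\}$) are correct and in fact make explicit a point the paper leaves tacit.

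As for the route: your ``single descriptive-set-theoretic input'' --- that a coanalytic set all of whose Borel subsets lie in a ccc Borel $\sigma$-ideal must itself lie in the ideal --- is precisely the Szpilrajn--Marczewski theorem the paper cites (the completion $\mathcal B_\I(X)$ is closed under the Suslin operation, hence analytic and coanalytic sets are $\I$-measurable, and the Borel base then yields inner Borel approximation). Once you grant that theorem, your detour through the rank-level decomposition, the choice of $\alpha^*$, and the auxiliary tail set $K'=K\setminus B_{\alpha^*}$ is unnecessary: the paper applies the theorem directly to $K$ to get a Borel set $B\subseteq K$ with $K\setminus B\in\I$, notes that $B\subseteq B_\beta$ for some $\beta<\w_1$ by property (2) of the rank, and splits $\{x_\alpha\}_{\alpha<\w_1}$ into the countable set $\{x_\alpha\}_{\alpha\le\beta}$ and the tail $\{x_\alpha\}_{\beta<\alpha<\w_1}\subseteq K\setminus B_\beta\subseteq K\setminus B\in\I$. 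Also note that your contradiction framing is cosmetic --- your argument, once repaired, proves $A\in\I$ outright and never uses the hypothesis $A\notin\I$ --- and that ccc gets used twice in your version (once on the levels, once inside the quoted measurability fact), whereas the paper uses it only through the cited theorem. What your version buys is a more self-contained display of where ccc enters; what it costs is the extra level bookkeeping, which is exactly where the limit-ordinal slip crept in.
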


\begin{proof} Given any ccc Borel $\sigma$-ideal $\I$ on $X$, use the classical Szpilrajn-Marczewski Theorem \cite[\S11]{Ku} to conclude that the coanalytic set $K$ belongs to the completion $\mathcal B_\I(X)=\{A\subset X:\exists B\in \mathcal B(X)\;\;A\triangle B\in\I\}$ of the $\sigma$-algebra of Borel subsets of $X$ by the ideal $\mathcal I$. Consequently, there is a Borel subset $B\subset K$ of $X$ such that $K\setminus B\in\mathcal I$. By the property of the rank function, the Borel set $B$ lies in $B_\beta$ for some countable ordinal $\beta$. Then the set $\{x_\alpha\}_{\alpha<\w_1}$ belongs to the $\sigma$-ideal $\I$, being the union
of the countable set $\{x_\alpha\}_{\alpha\le \beta}$ and the set $\{x_\alpha\}_{\beta<\alpha<\w_1}\subset K\setminus B_\alpha\subset K\setminus B$ from the ideal $\I$.
\end{proof}

In order to prove Theorem~\ref{main} we shall combine Lemma~\ref{l1} with the following technical lemma that will be proved in Section~\ref{pf:l}.

\begin{lemma} \label{coan} For any uncountable Polish Abelian group $G$ there are a non-empty open set $U\subset G$ and a co-analytic subset $K$ of $G$ such that $U\subset(K\setminus A)-(K\setminus A)$ for
any analytic subspace $A\subset K$ of $G$.
\end{lemma}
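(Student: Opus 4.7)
The plan is to exploit property (2) of the $\Pi^1_1$-rank on $K$: any analytic $A \subset K$ lies in some Borel sublevel $B_\alpha = \{x \in K : \rank(x) \leq \alpha\}$, so it suffices to find $K$ and $U$ with $U \subset (K \setminus B_\alpha) - (K \setminus B_\alpha)$ for every $\alpha < \w_1$. I aim for this uniformly in $\alpha$ by arranging that each $K \setminus B_\alpha$ is comeager in a fixed non-empty open set $V \subset G$. A Pettis--Piccard-type argument in the Polish group $G$ then gives $V - V \subset (K \setminus B_\alpha) - (K \setminus B_\alpha)$: for any $g = v_1 - v_2$ with $v_1, v_2 \in V$, the comeager sets $K \setminus B_\alpha$ and $(K \setminus B_\alpha) - g$ both meet the non-empty open set $V \cap (V - g)$ comeagerly, producing $y$ with $y, y + g \in K \setminus B_\alpha$; setting $x := y + g$ gives $x - y = g$. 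Taking $U := V - V$, a non-empty open symmetric neighborhood of $0$ in $G$, finishes the reduction.

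To construct such a $K$, I would fix a non-empty open $V \subset G$ together with a dense $G_\delta$ subspace $E \subset V$ homeomorphic to $\w^\w$ (such an $E$ exists in every uncountable Polish space). The task then reduces to finding, in $\w^\w$, a coanalytic comeager set $W$ whose canonical $\Pi^1_1$-rank sublevels $W_\alpha$ are all meager. Transferring through the homeomorphism $\phi : \w^\w \to E$, one obtains $K := \phi(W) \subset V$: coanalytic in $G$, comeager in $V$ (since meagerness in the comeager $G_\delta$ subspace $E$ transfers to $V$), and with each Borel sublevel $B_\alpha := \phi(W_\alpha)$ meager in $V$.

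The main obstacle is the construction of $W \subset \w^\w$. The requirements force $W = \bigcup_{\alpha<\w_1} W_\alpha$ to be an $\w_1$-increasing union of meager Borel sets that is nevertheless comeager, which is possible only when the covering number of the meager ideal equals $\aleph_1$. This holds under the Continuum Hypothesis used in the main theorem, under which $W$ can be built by a transfinite recursion of length $\w_1$: one uses the $\Pi^1_1$-structure of a complete coanalytic set to keep each rank sublevel meager, while at every stage choosing a new point from a suitable open dense set to maintain the comeagerness of the emerging union. Once $W$ is in hand, the Pettis step and the topological transfer from $\w^\w$ to $V$ are standard, and combining them yields the open $U$ and the coanalytic $K$ required by the lemma.
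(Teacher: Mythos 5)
Your reduction through the rank function and the Pettis-type step are both sound as far as they go, but the set $W$ you need does not exist --- not merely ``only under CH,'' but outright, and your own ingredients show why. Suppose $K$ is coanalytic and comeager in a non-empty open $V$ with every sublevel $B_\alpha$ meager in $V$. The complement $V\setminus K$ is analytic and meager, hence contained in a meager $F_\sigma$ subset of $V$ (take closures of witnessing nowhere dense sets); consequently $K$ contains a Borel, indeed $G_\delta$, set $H$ that is comeager in $V$. But then the boundedness property (2) of the rank function --- the very property your reduction invokes --- places the Borel set $H$ inside a single sublevel $B_\alpha$, which is therefore comeager in $V$, contradicting your requirement. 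So an $\w_1$-increasing union of meager Borel sublevels exhausting a coanalytic set can never be comeager; the obstruction is not the value of $\mathrm{cov}(\mathcal M)$ but the Baire property of analytic and coanalytic sets (the same Szpilrajn--Marczewski fact the paper uses in Lemma~\ref{l1}) combined with rank boundedness, and CH cannot rescue it. A secondary problem is that the proposed transfinite recursion ``choosing a new point at every stage'' treats the sublevels $B_\alpha$ as free parameters, whereas they are determined by the coanalytic set and its rank; one cannot build a coanalytic set pointwise while prescribing its rank structure.

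This impossibility is exactly why the paper's proof avoids category-style largeness altogether and makes the largeness of $K\setminus A$ algebraic rather than topological. There, one builds sets $\Sigma_0,\Sigma_1$ from $2\e_n$-separated nets so that $\Sigma_0+\Sigma_1$ contains the ball $B(2\e_{-1})$ while each $\overline{\Sigma_i-\Sigma_i}$ is not a neighborhood of zero; the latter permits Cantor sets $C_i$ for which $(x,y)\mapsto x+y$ embeds $C_i\times\overline\Sigma_i$ closedly into $G$, and $K$ is taken to be the disjoint union of suitably shifted copies $\tilde h_i(K_i\times\overline{\Sigma}_i)$ with $K_i\subset C_i$ coanalytic non-analytic. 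Removing an analytic $A\subset K$ removes only an analytic projection $\pr_i(A_i)$ from each coordinate set $C_i$, which cannot cover the non-analytic $K_i$; hence entire translates $a_{-1}+c_0+\Sigma_0$ and $-c_1-\Sigma_1$ survive inside $K\setminus A$, and their difference contains $a_{-1}+c_0+c_1+B(2\e_{-1})\supset a_{-1}+B(\e_{-1})=U$. Note finally that the paper's lemma is a theorem of ZFC, with CH entering only in the proof of Theorem~\ref{main}; your route, even had the construction of $W$ been possible, would have consumed CH already at this stage.
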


\section{Proof of Theorem~\ref{main}} Assume the Continuum Hypothesis. Given an uncountable Polish Abelian group $G$ we need to construct a universally small subset $A\subset G$ with $\Pack(A)=1$. We shall use the additive notation for denoting the group operation on $G$. So, $0$ will denote the neutral element of $G$. For two subsets $A,B\subset G$ we put $A+B=\{a+b:a\in A,\;b\in B\}$ and $A-B=\{a-b:a\in A,\;b\in B\}$.

 By Lemma ~\ref{coan}, there are a non-empty open set $U\subset G$ and a
coanalytic subset $K$ such that $U\subset (K\setminus B)-(K\setminus B)$ for any Borel subset $B\subset K$ of $G$. This implies that the coanalytic set $K$ is not Borel in $G$. Let $\rank:K\to \w_1$ be a rank function for $K$. This function induces the decomposition $K=\bigcup_{\alpha<\w_1}B_\alpha$ into Borel sets $B_\alpha=\{x\in K:\rank(x)\le\alpha\}$, $\alpha<\w_1$, such that each  Borel subset $B\subset K$ of $G$ lies in some set $B_\alpha$, $\alpha<\w_1$.

The Continuum Hypothesis allows us to choose an enumeration $U=\{u_\alpha\}_{\alpha<\omega_1}$ of the open set $U$ such that for every $u\in U$ the set $\Omega_u=\{\alpha<\w_1: u_\alpha=u\}$ is uncountable. The separability of $G$ yields a countable subset $C\subset G$ such that $G=C+U$.

By induction, for every $\alpha<\w_1$ find two points $x_\alpha,y_\alpha\in K\setminus (B_\alpha\cup\{x_\beta:\beta<\alpha\})$ such that $x_\alpha-y_\alpha=u_\alpha$. Such a choice is always possible as $U\subset (K\setminus B)-(K\setminus B)$ for any Borel subset $B\subset K$ of  $G$. Lemma~\ref{l1} guarantees that the sets $\{x_\alpha\}_{\alpha<\w_1}$ and $\{y_\alpha\}_{\alpha<\w_1}$ are universally small in $G$ and so is the set $A=\{c+x_\alpha,y_\alpha:c\in C,\;\alpha<\w_1\}$. It remains to prove that $\Pack(A)=1$. This equality will follow as soon as we check that for every point $z\in G$ the intersection $A\cap (z+A)$ has cardinality of continuum. Since $C+U=G$, we can find elements $c\in C$ and $u\in U$ such that $z=c+u$. The choice of the enumeration $\{u_\alpha\}_{\alpha<\w_1}$  guarantees that the set $\Omega_u=\{\alpha<\w_1:u_\alpha=u\}$ has cardinality continuum. Now observe that for every $\alpha\in \Omega_u$ we get $z=c+u=c+u_\alpha=c+x_\alpha-y_\alpha$ and hence $c+x_\alpha=z+y_\alpha\in A\cap (z+A)$, which implies that the intersection $A\cap (z+A)\supset\{c+x_\alpha\}_{\alpha\in \Omega_u}$ has cardinality of continuum.

\section{Proof of Lemma~\ref{coan}}\label{pf:l}
 Fix an invariant metric $d\le 1$ generating the topology of $G$.
This metric is complete because the group $G$ is Polish. The  metric $d$ induces a norm $\|\cdot\|:G\to[0,1]$ on $G$ defined by $\|x\|=d(x,0)$. For an $\e>0$ by $B(\e)=\{x\in G:\|x\|<\e\}$ and $\bar B(\e)=\{x\in G:\|x\|\le\e\}$ we shall denote the open and closed $\e$-balls centered at zero.

We define a subset $D$ of $G$ to be {\em $\varepsilon$-separated}
if $d(x,y)\ge\varepsilon$ for any distinct points $x,y\in D$.
By Zorn's Lemma, each $\e$-separated subset $S$ of any subset $A\subset G$ can be enlarged to a
maximal $\e$-separated subset $\tilde S$ of $A$. This set $\tilde S$ is {\em $\e$-net} for $A$ in the sense that for each point $a\in A$ there is a point $s\in \tilde S$ with $d(a,s)<\e$.

Fix any non-zero element $a_{-1}\in G$ and let $\e_{-1}=\frac1{12}\|a_{-1}\|$.
By induction we can define a sequence $(\e_n)_{n\in\w}$ of positive real numbers and a sequence $(a_n)_{n\in\w}$ of points of the group $G$ such that
\begin{itemize}
\item $16\e_n\le \|a_n\|<\e_{n-1}$ for every $n\in\w$.
\end{itemize}
For every $n\in\w$, fix a maximal $2\e_n$-separated subset $X_n\ni 0$ in the ball $B(2\e_{n-1})$.

The choice of the  sequence $(\e_n)$ guarantees that the series $\sum_{n\in\w}\e_n$ is
convergent and thus for any sequence $(x_n)_{n\in\w}\in \prod_{n\in\w}X_n$ the
series $\sum_{n\in\w}x_n$ is convergent in $G$ (because
$\|x_n\|<2\e_{n-1}$ for all $n\in\IN$).
Therefore the following subsets of the group $G$ are well-defined:
$$
\begin{aligned}
\Sigma_0&=\big\{\sum_{n\in \omega}x_{2n}:(x_{2n})_{n\in\w}\in\prod_{n\in\w} X_{2n}\big\},\\
\Sigma_1&=\big\{\sum_{n\in \omega}x_{2n+1}:(x_{2n+1})_{n\in\w}\in\prod_{n\in\w}
X_{2n+1}\big\}.
\end{aligned}$$

These sets have the following properties:

\begin{claim}\label{cl1}
\begin{enumerate}
\item  $\Sigma_0\cup\Sigma_1\subset B(4\e_{-1})$.
\item $B(2\e_{-1})\subset \Sigma_1+\Sigma_0$.
\item For every $i\in\{0,1\}$ the closure $\overline{\Sigma_i-\Sigma_i}$ of the set $\Sigma_i-\Sigma_i$ in $G$ is not a neighborhood of zero.
\end{enumerate}
\end{claim}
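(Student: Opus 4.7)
The plan is to prove the three assertions separately, relying on the rapid decay $\e_n < \e_{n-1}/16$ (which follows from $16\e_n \le \|a_n\| < \e_{n-1}$) and on the observation that each $X_n$, being a \emph{maximal} $2\e_n$-separated subset of $B(2\e_{n-1})$, is automatically a $2\e_n$-net for that ball.

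Part (1) is purely a computation: the triangle inequality applied to any series $\sum_n x_{2n} \in \Sigma_0$ together with the pointwise bound $\|x_{2n}\| < 2\e_{2n-1}$ majorises it by the geometric series $2\e_{-1}\sum_{k\ge 0} 16^{-2k} < 4\e_{-1}$, and the same argument handles $\Sigma_1$.

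For part (2), I will approximate an arbitrary $g \in B(2\e_{-1})$ greedily. Setting $r_0 := g$, at each stage the remainder $r_n \in B(2\e_{n-1})$ admits, by the net property, some $x_n \in X_n$ with $\|r_n - x_n\| < 2\e_n$; the recursion then continues with $r_{n+1} := r_n - x_n \in B(2\e_n) \subset B(2\e_{(n+1)-1})$. Since $r_n \to 0$, telescoping gives $g = \sum_n x_n$, and splitting the series by the parity of the index exhibits $g$ as an element of $\Sigma_0 + \Sigma_1 = \Sigma_1 + \Sigma_0$.

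Part (3) is the main obstacle. The plan is to show that the points $a_{2n+1}$ (respectively $a_{2n}$) lie outside $\overline{\Sigma_0 - \Sigma_0}$ (respectively $\overline{\Sigma_1 - \Sigma_1}$) for every $n$; since $\|a_n\| < \e_{n-1} \to 0$, this provides arbitrarily small points avoiding either closure, so neither can be a neighborhood of $0$. I describe the $\Sigma_0$ case; the $\Sigma_1$ case is symmetric. Suppose for contradiction that some $g = \sum_{k \in \w} d_{2k} \in \Sigma_0 - \Sigma_0$, with $d_{2k} := x_{2k} - y_{2k}$ a difference of two elements of $X_{2k}$, satisfies $\|g - a_{2n+1}\| < \e_{2n+1}$. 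A geometric-series estimate bounds the tail $v := \sum_{k > n} d_{2k}$ by $5\e_{2n+1}$, so the head $h := \sum_{k \le n} d_{2k}$ lies within $6\e_{2n+1}$ of $a_{2n+1}$. The case $h = 0$ is ruled out immediately because $\|a_{2n+1}\| \ge 16\e_{2n+1} > 6\e_{2n+1}$. Otherwise let $k^* \le n$ be the least index with $d_{2k^*} \ne 0$; the $2\e_{2k^*}$-separation of $X_{2k^*}$ forces $\|d_{2k^*}\| \ge 2\e_{2k^*}$, while the analogous tail bound $\|h - d_{2k^*}\| < 5\e_{2k^*+1}$ combined with the triangle inequality yields $\|d_{2k^*}\| < \e_{2n} + 6\e_{2n+1} + 5\e_{2k^*+1}$. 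Invoking $\e_{2k^*+1} < \e_{2k^*}/16$ and separately treating $k^* = n$ and $k^* < n$ (in the latter case $\e_{2n} < \e_{2k^*+1}$ as well), the right-hand side is strictly less than $2\e_{2k^*}$, the desired contradiction. The subtle point is matching the initial tolerance and geometric-tail bounds so that every possible location of $k^*$ is squeezed by the scale gap at $\e_{2k^*}$ versus $\e_{2k^*+1}$.
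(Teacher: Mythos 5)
Your proposal is correct and takes essentially the same route as the paper: parts (1) and (2) use the identical geometric-series bound and greedy $2\e_n$-net recursion, and part (3) rests on the same key mechanism, namely that $a_{2k+1}$ stays at positive distance from $\Sigma_0-\Sigma_0$ because the first nonvanishing difference term is $2\e_{2k^*}$-separated while all remaining terms decay geometrically, so the closure misses the null sequence $(a_{2k+1})$. Your head/tail contradiction anchored at scale $n$ is merely a reorganization of the paper's direct two-case distance estimate on the least differing index (cases $m\ge k+1$ and $m\le k$), and all your numerical bounds check out.
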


\begin{proof}
1. For every point $x\in \Sigma_0\cup\Sigma_1$ we can find a sequence $(x_n)_{n\in\w}\in\prod_{n\in\w}X_n$ with $x=\sum_{n=0}^\infty x_n$ and observe that $$\|x\|\le\sum_{n=0}^\infty\|x_n\|\le\sum_{n=0}^\infty 2\e_{n-1}< \sum_{n\in\w}\frac{2\e_{-1}}{16^{n}}<4\e_{-1}.$$
\smallskip

2. Given any point $x\in B(2\e_{-1})$, find a point $x_0\in X_0$ such that $\|x-x_0\|<2\e_0$. Such a point $x_0$ exists as the set $X_0$ is a $2\e_0$-net in $B(2\e_{-1})$. Continuing by induction, for every $n\in\w$ find a point $x_n\in X_n$ such that  $\|x-\sum_{i=0}^nx_i\|<2\e_n$. After completing the inductive construction, we obtain a sequence $(x_n)_{n\in\w}\in\prod_{n\in\w}X_n$ such that  $$x=\sum_{n\in\w}x_n=\sum_{n\in\w}x_{2n}+\sum_{n\in\w}x_{2n+1}\in \Sigma_0+\Sigma_1.$$
\smallskip

3. We shall give a detail proof of the third statement for $i=0$ (for $i=1$ the proof is analogous).  Since the sequence $(a_{2k+1})_{k\in\w}$ converges to zero, it suffices to show that $d(a_{2k+1},\Sigma_0-\Sigma_0)>0$ for all $k\in\w$.

Given two points $x,y\in\Sigma_0$, we shall prove that $d(a_{2k+1},x-y)\ge \e_{2k+1}$. If $x=y$, then $d(a_{2k+1},x-y)=d(a_{2k+1},0)=\|a_{2k+1}\|> \e_{2k+1}$ by the choice of $a_{2k+1}$. So, we assume that $x\ne y$. Find infinite
sequences
$(x_{2n})_{n\in\w},(y_{2n})_{n\in\w}\in\prod_{n\in\w}X_{2n}$ with
$x=\sum_{n\in\w}x_{2n}$ and $y=\sum_{n\in\w}y_{2n}$.

Let $m=\min\{n\in\w:x_{2n}\ne y_{2n}\}$. If $m\ge k+1$, then
$$
\begin{aligned}
\|x-y\|=&\,\|\sum_{n\ge m}x_{2n}-y_{2n}\|\le\sum_{n\ge m}\|x_{2n}\|+\|y_{2n}\|\le\\
\le&\,2\sum_{n\ge m}2\e_{2n-1}\le
8\,\e_{2m-1}\le 8\,\e_{2k+1}<\|a_{2k+1}\|-\e_{2k+1}
\end{aligned}$$ and hence $d(x-y,a_{2k+1})\ge\e_{2k+1}$.

If $m\le k$, then
$$
\begin{aligned}
\|x-y\|=&\,\|(x_{2m}-y_{2m})+\sum_{n>m}(x_{2n}-y_{2n})\|\ge \|x_{2m}-y_{2m}\|-\sum_{n>m}(\|x_{2n}\|+\|y_{2n}\|)\ge\\
\ge&2\e_{2m}-2\sum_{n>m}2\e_{2n-1} \ge 2\e_{2m}-8\e_{2m+1}\ge \frac32\e_{2m}\ge \frac32\e_{2k}>\|a_{2k+1}\|+\frac12\e_{2k}
\end{aligned}$$ according to the choice of the point $a_{2k+1}$. Consequently, $d(x-y,a_{2k+1})\ge\frac12\e_{2k}\ge \e_{2k+1}$.
\end{proof}

A subset $C$ of $G$ will be called {\em a Cantor set} in $G$ if $C$ is homeomorphic to the Cantor cube $\{0,1\}^\w$. By the classical Brouwer's Theorem \cite[7.4]{Ke}, this happens if and only if $C$ is compact, zero-dimensional and has no isolated points.

\begin{claim}\label{cl2} For every $i\in\{0,1\}$ there is a Cantor set $C_i\subset B(\e_{0})$ such that the map $h_i:C_i\times \overline\Sigma_i\to G$, $h_i:(x,y)\mapsto x+y$, is a closed topological embedding.
\end{claim}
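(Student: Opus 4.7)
Set $F_i := \overline{\Sigma_i - \Sigma_i}$, a symmetric closed subset of $G$ containing $0$ but, by Claim~\ref{cl1}(3), not a neighborhood of $0$; hence $G\setminus F_i$ is a nonempty open set accumulating at $0$. The plan is to build $C_i$ as a Cantor scheme inside $B(\e_0)$, arranged so that the only difference of two points of $C_i$ lying in $F_i$ is $0$; from this, the embedding properties of $h_i$ fall out using the compactness of $C_i$.

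\textbf{Cantor scheme.} By recursion on $n\in\w$, for every $s\in 2^n$ I produce a point $g_s\in G$ and an open neighborhood $V_s\ni g_s$ such that
\begin{enumerate}
\item[(a)] $\overline{V_\emptyset}\subset B(\e_0)$, $\overline{V_{s0}}\cup\overline{V_{s1}}\subset V_s$ with $\overline{V_{s0}}\cap\overline{V_{s1}}=\emptyset$, and $\mathrm{diam}(V_s)<2^{-n}$;
\item[(b)] $(\overline{V_s}-\overline{V_t})\cap F_i=\emptyset$ for all distinct $s,t\in 2^n$.
\end{enumerate}
Starting from $V_\emptyset=B(\e_0)$, $g_\emptyset=0$, I split $V_s$ at the inductive step by picking $p_s\notin F_i$ of sufficiently small norm that $g_s+p_s\in V_s$ (possible because $F_i$ is not a neighborhood of $0$). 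Setting $g_{s0}:=g_s$, $g_{s1}:=g_s+p_s$, one has $g_{s0}-g_{s1}=-p_s\in G\setminus F_i$ (using $F_i=-F_i$); continuity of subtraction together with openness of $G\setminus F_i$ then allow me to shrink to disjoint neighborhoods $V_{s0},V_{s1}\subset V_s$ of diameter $<2^{-(n+1)}$ satisfying $(\overline{V_{s0}}-\overline{V_{s1}})\cap F_i=\emptyset$. The remaining ``cross'' conditions for different predecessors descend for free from (b) at stage $n$, since $\overline{V_{sa}}\subset\overline{V_s}$ and $\overline{V_{tb}}\subset\overline{V_t}$.

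\textbf{Conclusion.} The set $C_i:=\bigcap_n\bigcup_{s\in 2^n}\overline{V_s}\subset B(\e_0)$ is compact, zero-dimensional and perfect by (a), hence a Cantor set by Brouwer's Theorem. For any distinct $x,y\in C_i$ their coding branches first diverge at some level $n$, yielding $s\ne t$ in $2^n$ with $x\in\overline{V_s}$ and $y\in\overline{V_t}$; (b) then forces $x-y\notin F_i$, so $(C_i-C_i)\cap F_i=\{0\}$. Injectivity of $h_i$ is immediate: any relation $x+y=x'+y'$ with $x,x'\in C_i$ and $y,y'\in\overline{\Sigma_i}$ gives $x-x'=y'-y\in \overline{\Sigma_i}-\overline{\Sigma_i}\subset F_i$, so $x=x'$ and $y=y'$. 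Compactness of $C_i$ handles the rest: if $x_n+y_n\to g$ with $(x_n,y_n)\in C_i\times\overline{\Sigma_i}$, any convergent subsequence $x_{n_k}\to x^*\in C_i$ forces $y_{n_k}\to g-x^*\in\overline{\Sigma_i}$ (closedness), so $g\in h_i(C_i\times\overline{\Sigma_i})$; injectivity pins the same limit on every convergent subsequence of $(x_n)$, so $(x_n,y_n)\to(x^*,g-x^*)$, giving both closedness of the image and continuity of $h_i^{-1}$. The main obstacle is the inductive maintenance of (b), where Claim~\ref{cl1}(3) does the essential work.
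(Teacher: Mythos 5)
Your proposal is correct and follows essentially the same route as the paper: the paper simply invokes the fact that $\overline{\Sigma_i-\Sigma_i}$ is not a neighborhood of zero and ``repeats the proof of Lemma~2.1 of [BLR]'' to get a Cantor set $C_i$ with $(C_i-C_i)\cap(\overline\Sigma_i-\overline\Sigma_i)=\{0\}$, which is exactly what your Cantor scheme builds explicitly, and your compactness argument for closedness of $h_i$ is the paper's one-line verification spelled out. (The only blemish is the base case, where $V_\emptyset=B(\e_0)$ does not literally satisfy $\overline{V_\emptyset}\subset B(\e_0)$; this is harmless since $C_i\subset\overline{V_0}\cup\overline{V_1}\subset V_\emptyset$, or one can start from a smaller ball.)
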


\begin{proof} Taking into account that $\overline{\overline\Sigma_i-\overline\Sigma_i}=\overline{\Sigma_i-\Sigma_i}$ is not a neighborhood of zero in $G$, and repeating the proof of Lemma~2.1 of \cite{BLR}, we can construct a Cantor set $C_i\subset B(\e_{0})$ such that for any distinct points $x,y\in C_i$ the shifts $x+\overline\Sigma_i$ and $y+\overline\Sigma_i$ are disjoint. This implies that the map $h_i:C_i\times\overline{\Sigma}_i\to G$, $h_i:(x,y)\mapsto x+y$, is injective. Taking into account that the set $C_i$ is compact and $\overline{\Sigma}_i$ is closed in $G$, one can check that the map $h_i$ is closed and hence a closed topological embedding.
\end{proof}

Observe that for every $i\in\{0,1\}$ the embedding $h_i$ has image $h_i(C_i\times\overline{\Sigma}_i)=C_i+\overline{\Sigma}_i\subset B(\e_{0})+\bar B(4\e_{-1})\subset
B(5\e_{-1})$. Now we modify the closed embeddings $h_0$ and $h_1$ to closed embeddings
$$\tilde h_0:C_0\times\overline{\Sigma}_0\to G,\;\;\tilde h_0:(x,y)\mapsto a_{-1}+x+y$$and
$$\tilde h_1:C_1\times \overline{\Sigma}_1\to G,\;\;\tilde h_1(x,y)=-x-y.$$ These embeddings have images
$\tilde h_0(C_0\times\overline{\Sigma}_0)\subset a_{-1}+B(5\e_{-1})$ and $\tilde h_1(C_1\times\overline{\Sigma}_1)\subset a_{-1}-B(5\e_{-1})=a_{-1}+B(5\e_{-1})$.
Since $\|a_{-1}\|=12\e_{-1}$, we conclude that the closed subsets $\tilde h_i(C_i\times \overline{\Sigma}_i)$, $i\in\{0,1\}$, of $G$ are disjoint.

For every $i\in\{0,1\}$ fix a coanalytic non-analytic subset $K_i$ in the Cantor set $C_i$. It follows that the disjoint union $K=\bigcup_{i=0}^1 \tilde h_i(K_i+\overline{\Sigma}_i)$ is a coanalytic subset of $G$.

The following claim completes the proof of the lemma and shows that the coanalytic set $K$ and the open set  $U=a_{-1}+B(\e_{-1})$ have the required property.

\begin{claim} $U\subset (K\setminus A)-(K\setminus A)$ for any analytic subspace $A\subset K$.
\end{claim}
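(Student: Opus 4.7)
The plan is to prove, given any $u\in U$ and any analytic subspace $A\subset K$, that $u$ admits a representation $u=k_0-k_1$ with $k_0,k_1\in K\setminus A$. Writing $u=a_{-1}+v$ with $v\in B(\e_{-1})$, I look for the two representatives in the concrete form $k_0=\tilde h_0(x_0,y_0)=a_{-1}+x_0+y_0$ and $k_1=\tilde h_1(x_1,y_1)=-x_1-y_1$, where $x_i\in K_i\subset C_i$ and $y_i\in\overline\Sigma_i$. A short computation reduces the equation $k_0-k_1=u$ to the identity
\[
x_0+x_1+y_0+y_1=v,
\]
so the task splits into two sub-tasks: (i) choose $x_0,x_1$ so that the fibres $\{x_i\}\times\overline\Sigma_i$ entirely avoid $\tilde h_i^{-1}(A)$, and then (ii) choose $y_0,y_1$ so that the displayed identity holds.

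For (i) I exploit the coanalytic-non-analytic nature of $K_i$. Since $\tilde h_i:C_i\times\overline\Sigma_i\to G$ is a closed topological embedding by Claim~\ref{cl2} and the two images $\tilde h_0(C_0\times\overline\Sigma_0)$ and $\tilde h_1(C_1\times\overline\Sigma_1)$ are disjoint, the preimages $\tilde A_i=\tilde h_i^{-1}(A)$ are analytic subsets of $C_i\times\overline\Sigma_i$ lying in $K_i\times\overline\Sigma_i$. Their projections $\pi_i(\tilde A_i)$ onto the first coordinate are therefore analytic subsets of the coanalytic set $K_i$. Applying a rank function on $K_i$ exactly as in Lemma~\ref{l1}, each projection is contained in some Borel set $B^{(i)}_{\beta}\subsetneq K_i$, the strict inclusion being forced by the assumption that $K_i$ is not analytic. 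I can therefore pick points $x_i\in K_i\setminus\pi_i(\tilde A_i)$, and this choice guarantees that $\tilde h_i(x_i,y_i)\notin A$ for \emph{every} $y_i\in\overline\Sigma_i$.

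For (ii), with $x_0,x_1$ already fixed, the equation $y_0+y_1=v-x_0-x_1$ is solvable thanks to Claim~\ref{cl1}(2). Indeed, from $C_i\subset B(\e_0)$ and the inductive inequality $2\e_0<\e_{-1}$ it follows that $\|v-x_0-x_1\|<\e_{-1}+2\e_0<2\e_{-1}$, so $v-x_0-x_1\in B(2\e_{-1})\subset \Sigma_0+\Sigma_1$, and there exist $y_i\in\Sigma_i\subset\overline\Sigma_i$ with $y_0+y_1=v-x_0-x_1$. Setting $k_i=\tilde h_i(x_i,y_i)$ then yields $k_0,k_1\in K\setminus A$ with $k_0-k_1=a_{-1}+v=u$, as required. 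The main obstacle I anticipate lies in step (i): one must verify that $\pi_i(\tilde A_i)$ is a \emph{proper} subset of $K_i$, and this is precisely where the choice of $K_i$ as coanalytic non-analytic inside $C_i$ becomes essential; the remainder reduces to straightforward bookkeeping with the norm estimates built into the inductive choice of $(\e_n)$ and $(a_n)$.
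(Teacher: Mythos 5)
Your proof is correct and takes essentially the same route as the paper's: you pick $x_i\in K_i\setminus\pr_i\bigl(\tilde h_i^{-1}(A)\bigr)$, which exists because the projection is analytic inside the non-analytic set $K_i$, and then solve $y_0+y_1=v-x_0-x_1$ in $\Sigma_0+\Sigma_1$ via Claim~\ref{cl1}(2) and the estimate $C_0+C_1\subset B(2\e_0)\subset B(\e_{-1})$, exactly as in the paper's set-theoretic formulation $(K\setminus A)-(K\setminus A)\supset a_{-1}+c_0+c_1+B(2\e_{-1})\supset U$. Your only deviation, the detour through rank functions in step (i), is harmless but unnecessary: if the analytic projection equalled $K_i$ then $K_i$ itself would be analytic, which is the one-line contradiction the paper uses.
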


\begin{proof} Given an analytic subspace $A\subset K$, for every $i\in\{0,1\}$, consider its preimage $A_i=\tilde h_1^{-1}(A)\subset C_i\times\overline{\Sigma}_i$ and its projection $\pr_i(A_i)$ onto the Cantor set $C_i$. It follows from $A\subset K$ and $\bigcap_{i=0}^1\tilde h_i(C_i\times\overline{\Sigma}_i)=\emptyset$ that each set $A_i$ is an analytic subspace of the coanalytic set $K_i$. Since the space $K_i$ is not analytic, there is a point  $c_i\in K_i\setminus \pr_i(A_i)$. It follows that
$$\bigcup_{i=0}^1\tilde h_i(\{c_i\}\times\Sigma_i)=(a_{-1}+c_0+\Sigma_0)\cup(-c_1-\Sigma_1)\subset K\setminus A$$ and hence
$$(K\setminus A)-(K\setminus A)\supset a_{-1}+c_0+\Sigma_0+c_1+\Sigma_1\supset a_{-1}+c_0+c_1+B(2\e_{-1})\supset a_{-1}+B(\e_{-1})=U$$according to Claim~\ref{cl1}(2).
The inclusion $B(\e_{-1})\subset c_0+c_1+B(2\e_{-1})$ follows from $c_0+c_1\in C_0+C_1\subset B(\e_0)+B(\e_0)\subset B(2\e_0)\subset B(\e_{-1})$.
\end{proof}

\end{document}